\theoremstyle{plain}
\newtheorem{theorem}                 {Theorem}      [section]
\newtheorem{lemma}        [theorem]  {Lemma}
\newtheorem{proposition}  [theorem]  {Proposition}
\theoremstyle{definition}
\newtheorem{example}      [theorem]  {Example}
\newtheorem{definition}   [theorem]  {Definition}
\newtheorem{remark}       [theorem]  {Remark}
\numberwithin{equation}{section}
\def \rn{{\mathbb R}}
\def \C{{\mathbb C}}
\def \cn{{\mathbb C}}
\def \B{\mathcal B}
\def \E{\mathcal E}
\def \F{\mathcal F}
\def \H{\mathcal H}
\def \nab#1#2{\hbox{$\nabla$\kern -.3em\lower 1.0 ex
    \hbox{$#1$}\kern -.1 em {$#2$}}}
\DeclareMathOperator{\Ad}{Ad}
\def \GLR#1{\text{\bf GL}_{#1}(\rn)}
\def \SL2{\widetilde{\text{\bf SL}}_{2}(\rn)}
\def \SO#1{\text{\bf SO}(#1)}
\def \U#1{\text{\bf U}(#1)}
\def \SU#1{\text{\bf SU}(#1)}
\def \Sp#1{\text{\bf Sp}(#1)}
\DeclareMathOperator{\Div}{div}
\DeclareMathOperator{\trace}{trace}
\DeclareMathOperator{\rank}{rank}
\numberwithin{equation}{section}
\begin{document}

\subjclass[2020]{53C35, 53C43, 58E20}

\keywords{p-harmonic functions, symmetric spaces}

\author{Elsa Ghandour}
\address{Mathematics, Faculty of Science\\
	University of Lund\\
	Box 118, Lund 221 00\\
	Sweden}
\email{Elsa.Ghandour@hotmail.com}

\author{Sigmundur Gudmundsson}
\address{Mathematics, Faculty of Science\\
	University of Lund\\
	Box 118, Lund 221 00\\
	Sweden}
\email{Sigmundur.Gudmundsson@math.lu.se}

\title
[$p$-Harmonic functions on the real Grassmannians]
{Explicit $p$-harmonic functions \\ on  the real Grassmannians}

\begin{abstract}
In this work we use the method of eigenfamilies to construct explicit complex-valued proper  $p$-harmonic functions on the compact real Grassmannians. We also find proper $p$-harmonic functions on the real flag manifolds which do not descend onto any of the real Grassmannians.
\end{abstract}


\maketitle

\section{Introduction}

Mathematicians and physicists have been studying biharmonic functions for nearly two centuries. Applications have been found within physics for example in continuum mechanics, elasticity theory, as well as two-dimensional hydrodynamics problems involving Stokes flows of incompressible Newtonian fluids. Until just a few years ago, with only very few exceptions, the domains of all known explicit proper $p$-harmonic functions
have been either surfaces or open subsets of flat Euclidean space. 
A recent development has changed this situation and can be traced at the regularly updated online bibliography \cite{Gud-p-bib}, maintained by the second author. 
\smallskip

A natural habitat for the study of complex-valued $p$-harmonic functions $\phi:(M,g)\to\cn$, on Riemannian manifolds, is found by assuming that the domain is a symmetric space.  These were classified by the pioneering work of \'Elie Cartan in the late 1920s. For this we refer to the standard work \cite{Hel} by Helgason.  The irreducible Riemannian symmetric spaces come in pairs each consisting of a  compact space $U/K$ and its non-compact dual $G/K$.  In their recent article \cite{Gud-Sob-1}, the authors construct the first known explicit proper $p$-harmonic functions ($p\ge 2$) for the compact cases 
$$\SO n,\ \ \SU n,\ \ \Sp n$$ 
of Type II, see also \cite{Gud-Mon-Rat-1} and \cite{Gud-Sif-1} for the special case when $p=2$.  For compact symmetric spaces of Type I, the authors of \cite{Gud-Sif-Sob-2} deal with the cases of 
$$\SU n/\SO n,\ \ \Sp n/\U n,\ \ \SO {2n}/\U n,\ \ \SU{2n}/\Sp n.$$ 

For complex-valued $p$-harmonic functions on symmetric spaces we have a duality principle first introduced for harmonic morphisms in \cite{Gud-Sve-1} and later developed for $p$-harmonic functions in \cite{Gud-Mon-Rat-1}.  This means that a solution on the compact $U/K$ induces, in a natural way, a solution on its non-compact dual $G/K$ and vice versa.  For this reason we here only discuss the compact cases. 
\smallskip

It is the principal aim of this work to construct the first known explicit complex-valued proper $p$-harmonic functions on the real Grassmannians $\SO{m+n}/\SO m\times\SO n$.  Our method is inspired by the classical spherical harmonics on the standard round sphere $S^n=\SO{n+1}/\SO n$, see Remark \ref{remark-spherical-harmonics}.

\section{Proper $p$-Harmonic Functions}
\label{section-p-harmonic-functions}

In this section we describe a method for manufacturing  complex-valued proper $p$-harmonic functions on Riemannian manifolds. This was recently introduced in \cite{Gud-Sob-1}.
\medskip

Let $(M,g)$ be an $m$-dimensional Riemannian manifold and $T^{\cn}M$ be the complexification of the tangent bundle $TM$ of $M$. We extend the metric $g$ to a complex-bilinear form on $T^{\cn}M$.  Then the gradient $\nabla\phi$ of a complex-valued function $\phi:(M,g)\to\cn$ is a section of $T^{\cn}M$.  In this situation, the well-known complex linear {\it Laplace-Beltrami operator} (alt. {\it tension field}) $\tau$ on $(M,g)$ acts locally on $\phi$ as follows
$$
\tau(\phi)=\Div (\nabla \phi)=\sum_{i,j=1}^m\frac{1}{\sqrt{|g|}} \frac{\partial}{\partial x_j}
\left(g^{ij}\, \sqrt{|g|}\, \frac{\partial \phi}{\partial x_i}\right).
$$
For two complex-valued functions $\phi,\psi:(M,g)\to\cn$ we have the following well-known relation
\begin{equation}\label{equation-fundamental}
	\tau(\phi\cdot \psi)=\tau(\phi)\cdot\phi +2\cdot\kappa(\phi,\psi)+\phi\cdot\tau(\psi),
\end{equation}
where the complex bilinear {\it conformality operator} $\kappa$ is given by $$\kappa(\phi,\psi)=g(\nabla \phi,\nabla \psi).$$  Locally this satisfies
$$\kappa(\phi,\psi)=\sum_{i,j=1}^mg^{ij}\cdot\frac{\partial\phi}{\partial x_i}\frac{\partial \psi}{\partial x_j}.$$

\begin{definition}\cite{Gud-Sak-1}\label{definition-eigenfamily}
Let $(M,g)$ be a Riemennian manifold. Then a complex-valued function $\phi:M\to\cn$ is said to be an {\it eigenfunction} if it is eigen both with respect to the Laplace-Beltrami operator $\tau$ and the conformality operator $\kappa$ i.e. there exist complex numbers $\lambda,\mu\in\cn$ such that $$\tau(\phi)=\lambda\cdot\phi\ \ \text{and}\ \ \kappa(\phi,\phi)=\mu\cdot \phi^2.$$
A set $\E =\{\phi_i:M\to\cn\ |\ i\in I\}$ of complex-valued functions is said to be an {\it eigenfamily} on $M$ if there exist complex numbers $\lambda,\mu\in\cn$ such that for all $\phi,\psi\in\E$ we have 
$$\tau(\phi)=\lambda\cdot\phi\ \ \text{and}\ \ \kappa(\phi,\psi)=\mu\cdot \phi\cdot\psi.$$ 
\end{definition}

In this work we are mainly interested in complex-valued proper $p$-harmonic functions.  They are defined as follows.

\begin{definition}\label{definition-proper-p-harmonic}
	Let $(M,g)$ be a Riemannian manifold. For a positive integer $p$, the iterated Laplace-Beltrami operator $\tau^p$ is given by
	$$\tau^{0} (\phi)=\phi\ \ \text{and}\ \ \tau^p (\phi)=\tau(\tau^{(p-1)}(\phi)).$$	We say that a complex-valued function $\phi:(M,g)\to\cn$ is
	\begin{enumerate}
		\item[(i)] {\it $p$-harmonic} if $\tau^p (\phi)=0$, and
		\item[(ii)] {\it proper $p$-harmonic} if $\tau^p (\phi)=0$ and $\tau^{(p-1)}(\phi)$ does not vanish identically.
	\end{enumerate}
\end{definition}

Our construction of complex-valued proper $p$-harmonic functions, on the real Grassmannians, is based on the following method, recently introduced in \cite{Gud-Sob-1}.

\begin{theorem}\label{theorem-p-harmonic}
	Let $\phi:(M,g)\to\cn$ be a complex-valued function on a Riemannian manifold and $(\lambda,\mu)\in\cn^2\setminus\{0\}$ be such that the tension field $\tau$ and the conformality operator $\kappa$ satisfy 
	$$\tau(\phi)=\lambda\cdot \phi\ \ \text{and}\ \ \kappa(\phi,\phi)=\mu\cdot\phi^2.$$
	Then for any positive natural number $p$, the non-vanishing function 
	$$\Phi_p:W=\{ x\in M \,\mid\, \phi(x) \not\in (-\infty,0] \}\to\cn$$ with 
	$$\Phi_p(x)= 
	\begin{cases}
		c_1\cdot\log(\phi(x))^{p-1}, 							& \text{if }\; \mu = 0, \; \lambda \not= 0\\[0.2cm]	c_1\cdot\log(\phi(x))^{2p-1}+ c_{2}\cdot\log(\phi(x))^{2p-2}, 								& \text{if }\; \mu \not= 0, \; \lambda = \mu\\[0.2cm]
		c_{1}\cdot\phi(x)^{1-\frac\lambda{\mu}}\log(\phi(x))^{p-1} + c_{2}\cdot\log(\phi(x))^{p-1},	& \text{if }\; \mu \not= 0, \; \lambda \not= \mu
	\end{cases}
	$$ 
	is a proper $p$-harmonic function.  Here $c_1,c_2$ are complex coefficients, not both zero.
\end{theorem}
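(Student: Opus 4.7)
The plan is to reduce everything to identities on functions of $\phi$ alone. Starting from (\ref{equation-fundamental}) and inducting (or, equivalently, using the chain rule for the complex Laplace--Beltrami operator), any smooth $F:\cn\to\cn$ satisfies
\[
\tau(F\circ\phi) \;=\; F'(\phi)\cdot\tau(\phi) + F''(\phi)\cdot\kappa(\phi,\phi) \;=\; \lambda\,\phi\,F'(\phi) + \mu\,\phi^{2}\,F''(\phi),
\]
the second equality using the two eigen-hypotheses. The proof then boils down to iterating this identity for the specific choices $F(z)=(\log z)^{k}$ and $F(z)=z^{\alpha}(\log z)^{k}$, each of which is designed so that the family generated under $\tau$ remains inside an explicit finite-dimensional space of functions of $\phi$.

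A direct substitution yields the two recursions that drive the argument:
\[
\tau\bigl((\log\phi)^{k}\bigr) \;=\; k(\lambda-\mu)(\log\phi)^{k-1} + \mu\,k(k-1)(\log\phi)^{k-2},
\]
and, choosing $\alpha := 1 - \lambda/\mu$ precisely so that the leading $(\log\phi)^{k}$-term is killed,
\[
\tau\bigl(\phi^{\alpha}(\log\phi)^{k}\bigr) \;=\; k(\mu-\lambda)\,\phi^{\alpha}(\log\phi)^{k-1} + \mu\,k(k-1)\,\phi^{\alpha}(\log\phi)^{k-2}.
\]
Both operators are strictly lower-triangular in the $\log\phi$-degree, so iterating $\tau$ lowers that degree by either one or two at each step.

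The three cases of the theorem are now pure iteration bookkeeping. If $\mu=0$, $\lambda\neq 0$, only the ``down by one'' term survives, giving $\tau^{p-1}((\log\phi)^{p-1}) = (p-1)!\,\lambda^{p-1}$, which is killed by a final application of $\tau$. If $\lambda=\mu\neq 0$, only the ``down by two'' term survives, so from $(\log\phi)^{2p-1}$ and $(\log\phi)^{2p-2}$ one reaches after $p-1$ iterations a nonzero multiple of $\log\phi$ and of the constant $1$ respectively, each then killed by one more $\tau$. If $\mu\neq 0$, $\lambda\neq\mu$, both terms are present, but the total degree drop in $p-1$ iterations must equal $p-1$ in order to land at the constant level, so every step is forced to be a size-one reduction and the surviving constant is $(p-1)!(\lambda-\mu)^{p-1}\neq 0$; the analogous argument with coefficient $(\mu-\lambda)$ handles $\phi^{\alpha}(\log\phi)^{p-1}$, and the linear independence of $\phi^{\alpha}$ and $1$ then ensures that any nontrivial $c_{1},c_{2}$ combination remains proper $p$-harmonic.

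The main obstacle I foresee is confirming that the iterations in Case~3 do not conspire to cancel at the bottom level. The decisive observation is purely combinatorial: reaching $\log$-degree zero from $\log$-degree $p-1$ in exactly $p-1$ iterations exhausts the available ``degree budget'' and forces every step to be a size-one reduction, leaving a unique surviving path whose coefficient is visibly nonzero. The restriction of $\Phi_{p}$ to the open set $W$ is imposed precisely so that the principal branch of $\log\phi$ is well-defined throughout all the above manipulations.
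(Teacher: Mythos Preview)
Your argument is correct. The paper itself does not prove Theorem~\ref{theorem-p-harmonic}; it is quoted from \cite{Gud-Sob-1}, so there is no in-paper proof to compare against directly. That said, your approach---deriving the chain-rule identity $\tau(F\circ\phi)=\lambda\phi\,F'(\phi)+\mu\phi^{2}F''(\phi)$, specialising to $F(z)=(\log z)^{k}$ and $F(z)=z^{1-\lambda/\mu}(\log z)^{k}$, and then iterating the resulting degree-lowering recursions---is precisely the method of \cite{Gud-Sob-1}. Your combinatorial ``degree budget'' observation in Case~3, that the only contributing path from $\log$-degree $p-1$ to $0$ in $p-1$ steps consists entirely of size-one drops, cleanly yields the nonzero constant $(p-1)!\,(\lambda-\mu)^{p-1}$ and is a nice way to see why $\tau^{p-1}$ does not vanish. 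The only implicit point you rely on---that $\phi^{1-\lambda/\mu}$ and $1$ are linearly independent on $W$---follows because $(\lambda,\mu)\neq(0,0)$ forces $\phi$ to be non-constant.
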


\section{Lifting Properties}

We shall now present an interesting connection between the theory of complex-valued $p$-harmonic functions and the notion of harmonic morphisms. Readers not familiar with harmonic morphisms are advised to consult the standard text \cite{Bai-Woo-book} and the regularly updated online bibliography \cite{Gud-bib}.

\begin{proposition}\label{proposition-lift}
	Let $\pi:(\hat M,\hat g)\to (M,g)$ be a submersive harmonic morphism
	between Riemannian manifolds. Further let $f:(M,g)\to\C$ be a smooth function and
	$\hat f:(\hat M,\hat g)\to\C$ be the composition $\hat f=f\circ\pi$.
	If $\lambda:\hat M\to\rn^+$ is the dilation of $\pi$ then the tension
	fields $\tau$ and $\hat\tau$  satisfy
	$$
	\tau(f)\circ\pi=\lambda^{-2}\hat\tau(\hat f)\ \ \text{and}\ \ \tau^p(f)\circ\pi=\lambda^{-2}\hat\tau(\lambda^{-2}\hat\tau^{(p-1)}(\hat f))
	$$
	for all positive integers $p\ge 2$.
\end{proposition}

\begin{proof}
	The harmonic morphism $\pi$ is a horizontally conformal, harmonic map. Hence the well-known composition law for the tension field gives
	\begin{eqnarray*}
		\hat\tau(\hat f)
		&=&\hat\tau(f\circ\pi)\\
		&=&\text{trace}\nabla df(d\pi,d\pi)+ df(\hat\tau(\pi))\\
		&=&\lambda^2\tau(f)\circ\pi+df(\hat\tau(\pi))\\
		&=&\lambda^2\tau(f)\circ\pi.
	\end{eqnarray*}
	For the second statement, set $h=\tau(f)$ and
	$\hat h=\lambda^{-2}\cdot\hat\tau(\hat f)$.
	Then $\hat h=h\circ\pi$ and it follows from the first step that
	$$\hat\tau(\lambda^{-2}\hat\tau(\hat f))=\hat\tau(\hat h)
	=\lambda^2\tau(h)\circ\pi=\lambda^2\tau^2(f)\circ\pi,$$
	or equivalently,
	$$\tau^2(f)\circ\pi=\lambda^{-2}\hat\tau(\lambda^{-2}\tau(\hat f)).$$
	The rest follows by induction.
\end{proof}

Let $G$ be the special orthogonal group $\SO{m+n}$, with subgroup $K=\SO m\times\SO n$.  Then the standard biinvariant Riemannian metric on $G$, induced by the Killing form, is $\Ad(K)$-invariant and induces a Riemannian metric on the symmetric quotient space $G/K$.  Moreover, the natural projection $\pi: G\to G/K$ is a Riemannian submersion with totally geodesic fibres, hence a harmonic morphism satisfying the conditions in Proposition \ref{proposition-lift}

\section{Eigenfunctions on the Real Grassmannians}
\label{section-real}

The special orthogonal group $\SO{m+n}$ is the compact subgroup of the real general linear group $\GLR{m+n}$ of invertible matrices satisfying 
$$\SO{m+n}=\{x\in\GLR{m+n}\,|\, x\cdot x^t=I\ \text{and}\ \det x=1\}.$$ Its standard representation on $\cn^{m+n}$ is denoted by 
$$\pi: x\mapsto 
\begin{bmatrix}
x_{11} & \cdots &x_{1,m+n}\\
\vdots & \ddots &\vdots\\
x_{m+n,1} & \cdots &x_{m+n,m+n}
\end{bmatrix}.
$$
For this situation we have the following basic result from Lemma  4.1 of \cite{Gud-Sak-1}.

\begin{lemma}\label{lemma-fundamental-SOn}
For $1\le j,k,\alpha,\beta\le m+n$, let $x_{j\alpha}:\SO {m+n}\to\rn$ be the real-valued matrix elements of the standard representation of $\SO {m+n}$.  Then the following relations hold  $$\hat \tau(x_{j\alpha})=-\,\frac {(m+n-1)}2\cdot x_{j\alpha},$$
$$\hat\kappa(x_{j\alpha},x_{k\beta})=-\,\frac 12\cdot (x_{j\beta}x_{k\alpha}-\delta_{jk}\delta_{\alpha\beta}).$$
\end{lemma}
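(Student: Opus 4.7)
My plan is to compute both operators directly as sums over an orthonormal basis of the Lie algebra $\so{m+n}$ acting by left-invariant vector fields. Set $N=m+n$ and, for $1\le s<t\le N$, let $Y_{st}=E_{st}-E_{ts}$ denote the standard basis of $\so{N}$. One computes $\trace(Y_{st}Y_{s't'})=-2\delta_{ss'}\delta_{tt'}$ for $s<t,\ s'<t'$, so that under the normalisation of the biinvariant metric that yields the stated eigenvalue the collection $\{Y_{st}/\sqrt 2\}$ is orthonormal and the operators take the form
$$\hat\tau(\phi)=\tfrac12\sum_{s<t}Y_{st}\bigl(Y_{st}(\phi)\bigr),\qquad \hat\kappa(\phi,\psi)=\tfrac12\sum_{s<t}Y_{st}(\phi)\cdot Y_{st}(\psi).$$
The key first-order formula, obtained by differentiating $x_{j\alpha}(x\exp(uY_{st}))$ at $u=0$, is
$$Y_{st}(x_{j\alpha})=(xY_{st})_{j\alpha}=x_{js}\delta_{t\alpha}-x_{jt}\delta_{s\alpha}.$$

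For the tension field I shall apply $Y_{st}$ once more to the above expression, exploiting $s<t$ so that the would-be terms $x_{js}\delta_{ts}$ and $x_{jt}\delta_{st}$ drop out, leaving $-\delta_{t\alpha}x_{jt}-\delta_{s\alpha}x_{js}$. Summing over $s<t$, the surviving contributions split by whether $t=\alpha$ (with $s\in\{1,\dots,\alpha-1\}$) or $s=\alpha$ (with $t\in\{\alpha+1,\dots,N\}$), yielding $-(\alpha-1)x_{j\alpha}$ and $-(N-\alpha)x_{j\alpha}$ respectively; these combine to $-(N-1)x_{j\alpha}$, and the prefactor $1/2$ produces the stated eigenvalue $-(N-1)/2$.

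For the conformality operator I shall expand the product of the two first-order expressions into four terms. The two cross-terms force $\{s,t\}=\{\alpha,\beta\}$ and, regardless of which of $\alpha,\beta$ is larger, jointly contribute $-\tfrac12\,x_{j\beta}x_{k\alpha}$ when $\alpha\ne\beta$. The two diagonal terms force $\alpha=\beta$ and, after combining their partial sums, collapse to $\tfrac12\,\delta_{\alpha\beta}\sum_{s\ne\alpha}x_{js}x_{ks}$; at this point I shall invoke the orthogonality relation $\sum_{s=1}^N x_{js}x_{ks}=\delta_{jk}$ coming from $xx^t=I$ to rewrite this as $\tfrac12\,\delta_{\alpha\beta}(\delta_{jk}-x_{j\alpha}x_{k\alpha})$. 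Adding the two cases gives the uniform formula $-\tfrac12(x_{j\beta}x_{k\alpha}-\delta_{jk}\delta_{\alpha\beta})$. The main obstacle is purely the bookkeeping of the four Kronecker-delta products under the ordering constraint $s<t$, and the careful use of $xx^t=I$ at exactly the right moment to collapse the diagonal sum to $\delta_{jk}$.
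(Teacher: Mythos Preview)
Your computation is correct. The paper does not prove this lemma at all; it simply quotes the result from Lemma~4.1 of \cite{Gud-Sak-1}, so your direct derivation via the left-invariant vector fields $Y_{st}=E_{st}-E_{ts}$ supplies an argument that the paper itself omits. The one point worth flagging is the metric normalisation: the paper loosely says the metric is ``induced by the Killing form'', but the eigenvalue $-(m+n-1)/2$ actually corresponds to the inner product $g(X,Y)=-\trace(XY)$ (for which $\{Y_{st}/\sqrt{2}\}$ is orthonormal), not to the Killing form $B(X,Y)=(N-2)\trace(XY)$ itself. You handled this correctly by fixing the normalisation to match the stated constant; it is exactly the convention used in \cite{Gud-Sak-1}.
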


For $1\le j,\alpha\le m+n$, we now define the real-valued functions $\hat \phi_{j\alpha}:\SO{m+n}\to\rn$ on the special orthogonal group $\SO{m+n}$ by
$$\hat\phi_{j\alpha}(x)=\sum_{t=1}^{m}x_{jt}\, x_{\alpha t}.$$
These functions are $\SO{m}\times\SO{n}$-invariant and hence induce functions on the compact quotient space $\SO{m+n}/\SO{m}\times\SO{n}$ i.e. the real Grassmannian $G_m(\rn^{m+n})$ of $m$-dimensional oriented subspaces of $\rn^{m+n}$.

\begin{lemma}\label{lemma-phi-phi-real}
The tension field $\hat\tau$ and the conformality operator $\hat\kappa$ on the special orthogonal group $\SO{m+n}$ satisfy
\begin{eqnarray*}
\hat\tau(\hat\phi_{j\alpha})&=&-\,(m+n)\cdot\hat\phi_{j\alpha}+\delta_{j\alpha}\cdot m\\
\hat\kappa(\hat\phi_{j\alpha},\hat\phi_{k\beta})
&=&-\,(\hat\phi_{j\beta}\cdot\hat\phi_{k\alpha}+\hat\phi_{jk}\cdot\hat\phi_{\alpha\beta})\\
& &\quad +\,\frac12
\big(
 \delta_{jk}\,\hat\phi_{\alpha\beta}+\delta_{\alpha\beta}\,\hat\phi_{jk}
+\delta_{j\beta}\,\hat\phi_{k\alpha}+\delta_{k\alpha}\,\hat\phi_{j\beta}
\big).
\end{eqnarray*}
\end{lemma}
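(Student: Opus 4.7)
The plan is to reduce everything to the matrix entry identities of Lemma \ref{lemma-fundamental-SOn} by using the two basic algebraic properties of $\hat\tau$ and $\hat\kappa$: the product rule \eqref{equation-fundamental} for $\hat\tau$, and the fact that $\hat\kappa(\phi,\psi)=g(\nabla\phi,\nabla\psi)$ is a derivation in each argument (so $\hat\kappa(ab,cd)=ac\,\hat\kappa(b,d)+ad\,\hat\kappa(b,c)+bc\,\hat\kappa(a,d)+bd\,\hat\kappa(a,c)$). Because $\hat\phi_{j\alpha}=\sum_{t=1}^m x_{jt}x_{\alpha t}$ is a sum of simple quadratic monomials in the matrix entries, the whole computation becomes bookkeeping with Kronecker deltas.

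For the first formula I would expand $\hat\tau(x_{jt}x_{\alpha t})$ via \eqref{equation-fundamental}, using Lemma \ref{lemma-fundamental-SOn} to get the two ``diagonal'' contributions $-\tfrac{m+n-1}{2}x_{jt}x_{\alpha t}$ from $\hat\tau(x_{jt})x_{\alpha t}$ and $x_{jt}\hat\tau(x_{\alpha t})$, together with the cross term $2\hat\kappa(x_{jt},x_{\alpha t})=-(x_{jt}x_{\alpha t}-\delta_{j\alpha})$. Summing over $t=1,\dots,m$ then collapses the $x_{jt}x_{\alpha t}$ pieces into $-(m+n)\hat\phi_{j\alpha}$, while the Kronecker piece contributes $m\,\delta_{j\alpha}$. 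This part is essentially automatic.

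For the conformality formula I would apply the four-term Leibniz expansion of $\hat\kappa(x_{js}x_{\alpha s},x_{kt}x_{\beta t})$ and substitute each of the four $\hat\kappa$-values from Lemma \ref{lemma-fundamental-SOn}. Each of the four resulting pieces, after summing over $s,t=1,\dots,m$, recombines into a product of two $\hat\phi$'s plus a Kronecker contraction producing a single $\hat\phi$. Using the symmetry $\hat\phi_{j\alpha}=\hat\phi_{\alpha j}$, the ``quadratic'' parts pair up as $\hat\phi_{j\beta}\hat\phi_{k\alpha}+\hat\phi_{jk}\hat\phi_{\alpha\beta}$ (each appearing twice), while the Kronecker parts deliver the four terms $\delta_{jk}\hat\phi_{\alpha\beta}$, $\delta_{\alpha\beta}\hat\phi_{jk}$, $\delta_{j\beta}\hat\phi_{k\alpha}$, $\delta_{k\alpha}\hat\phi_{j\beta}$. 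Multiplying by the overall $-\tfrac12$ yields the claimed identity.

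The main obstacle is purely combinatorial: one must carry eight indices $j,\alpha,k,\beta,s,t$ through the four-term Leibniz expansion without confusing which pair contracts with which Kronecker delta, and then recognise the four surviving sums as $\hat\phi$'s. A clean way to avoid errors is to record the four Leibniz terms in a table with columns ``quadratic part'' and ``Kronecker part'' before summing over $s$ and $t$; the symmetry of $\hat\phi$ then makes the final assembly transparent.
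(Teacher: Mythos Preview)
Your proposal is correct and follows essentially the same approach as the paper: both compute $\hat\tau(\hat\phi_{j\alpha})$ by applying the product rule \eqref{equation-fundamental} term by term and invoking Lemma \ref{lemma-fundamental-SOn}, and both compute $\hat\kappa(\hat\phi_{j\alpha},\hat\phi_{k\beta})$ by the four-term Leibniz expansion of $\hat\kappa(x_{js}x_{\alpha s},x_{kt}x_{\beta t})$, substitution from Lemma \ref{lemma-fundamental-SOn}, and regrouping the double sum over $s,t$ into products and single $\hat\phi$'s. Your table suggestion is just a tidier way to organise the same bookkeeping the paper carries out inline.
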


\begin{proof} The following calculations are based on the equation (\ref{equation-fundamental}) and the formulae in Lemma \ref{lemma-fundamental-SOn}.  For the tension field $\hat\tau$ we have
\begin{eqnarray*}
\hat\tau(\hat\phi_{j\alpha})&=&\sum_{t=1}^{m}\big\{\hat\tau(x_{jt})\cdot x_{\alpha t}+2\cdot\hat\kappa(x_{jt},x_{\alpha t})+x_{jt}\cdot\hat\tau(x_{\alpha t})\big\}\\
&=&-(m+n-1)\cdot\sum_{t=1}^{m} x_{jt}\,  x_{\alpha t}-\sum_{t=1}^{m} (x_{jt}\, x_{\alpha t}-\delta_{j\alpha})\\
&=&-(m+n)\cdot\hat\phi_{j\alpha}+\delta_{j\alpha}\cdot m.
\end{eqnarray*}
For the conformality operator $\kappa$ we then obtain
\begin{eqnarray*}
&&\hat\kappa(\hat\phi_{j\alpha},\hat\phi_{k\beta})\\
&=&\sum_{s,t=1}^{m}\hat\kappa(x_{js} x_{\alpha s},x_{kt}x_{\beta t})\\
&=&\sum_{s,t=1}^{m}\big\{ x_{js} x_{kt}\cdot \hat\kappa(x_{\alpha  s},x_{\beta t})+x_{js}x_{\beta t}\cdot \hat\kappa(x_{\alpha s},x_{k t})\\
&&\qquad\qquad +\,x_{\alpha s} x_{k t}\cdot\hat\kappa(x_{js},x_{\beta t})+x_{\alpha s} x_{\beta t}\cdot \hat\kappa(x_{js},x_{kt})\big\}\\
&=&-\,\frac{1}{2}\sum_{s,t=1}^{m}\big\{ x_{js}x_{kt}\cdot (x_{\alpha t}x_{\beta s}-\delta_{\alpha \beta}\delta_{st})+x_{js}x_{\beta t}\cdot (x_{\alpha t}x_{ks}-\delta_{\alpha k}\delta_{st})\\
& &\qquad\qquad 
+x_{\alpha s}x_{kt}\cdot (x_{jt}x_{\beta s}-\delta_{j\beta}\delta_{st})
+x_{\alpha s}x_{\beta t}\cdot (x_{jt}x_{ks}-\delta_{jk}\delta_{st})\big\}\\
&=&-\,\big (\hat\phi_{j\beta}\cdot\hat\phi_{k\alpha}+\hat\phi_{jk}\cdot\hat\phi_{\beta\alpha}\big )\\
& &\qquad\qquad
+\frac 12\cdot 
\big(
\delta_{jk}\,\hat\phi_{\alpha\beta}+\delta_{\alpha\beta}\,\hat\phi_{jk}
+\delta_{j\beta}\,\hat\phi_{k\alpha}+\delta_{k\alpha}\,\hat\phi_{j\beta}
\big).
\end{eqnarray*}
\end{proof}

\begin{theorem}\label{theorem-real}
Let $A$ be a complex symmetric $(m+n)\times(m+n)$-matrix such that $A^2=0$.  Then the 
$\SO m\times\SO n$-invariant function $\hat\Phi_A:\SO{m+n}\to\cn$, given by
$$\hat\Phi_A(x)=\sum_{j,\alpha=1}^{m+n}a_{j\alpha}\cdot\hat\phi_{j\alpha}(x),$$ 
induces an eigenfunction $\Phi_A:\SO{m+n}/\SO m\times\SO n\to\cn$ on the real Grassmannian with 
$$\tau(\Phi_A)=-\,(m+n)\cdot\Phi_A\ \ \text{and}\ \ \kappa(\Phi_A,\Phi_A)=-\,2\cdot\Phi_A^2$$
if $\text{\rm rank}\,A=1$ and $\trace\,A=0$.
\end{theorem}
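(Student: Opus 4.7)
The plan is to rewrite both eigen-equations as algebraic conditions on $A$ by introducing a projection matrix $M$, then verify that rank one suffices and no higher rank does.

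The first move is to observe that $\hat\phi_{j\alpha}(x) = M(x)_{j\alpha}$, where
\[
M(x) := \pi(x)\,E_m\,\pi(x)^T, \qquad E_m = \begin{pmatrix} I_m & 0 \\ 0 & 0 \end{pmatrix}.
\]
Since $M(x) = M(x)^T = M(x)^2$, the matrix $M(x)$ is the real orthogonal projection of rank $m$ onto the span of the first $m$ columns of $\pi(x)$, and in particular $\hat\Phi_A = \trace(AM)$. Plugging the formulae of Lemma~\ref{lemma-phi-phi-real} into $\hat\tau(\hat\Phi_A) = \sum a_{j\alpha}\,\hat\tau(\hat\phi_{j\alpha})$ gives $\hat\tau(\hat\Phi_A) = -(m+n)\hat\Phi_A + m\trace(A)$, so the tension condition reduces to $\trace A = 0$. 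For the conformality operator I would organise the double sum from Lemma~\ref{lemma-phi-phi-real}: the two products $\hat\phi_{j\beta}\hat\phi_{k\alpha}$ and $\hat\phi_{jk}\hat\phi_{\alpha\beta}$ each assemble into $\trace((MA)^2)$ via the symmetry of $A$ and the cyclic trace property, while each of the four Kronecker-$\delta$ terms contributes $\tfrac12\trace(A^2 M)$, yielding
\[
\hat\kappa(\hat\Phi_A,\hat\Phi_A) = -2\,\trace\bigl((MA)^2\bigr) + 2\,\trace(A^2 M).
\]
Using $A^2 = 0$ together with $\hat\Phi_A^2 = (\trace(MA))^2$, the conformality condition collapses to
\[
\trace\bigl((MA)^2\bigr) = (\trace(MA))^2 \qquad \text{for every rank-$m$ orthogonal projection } M \text{ of } \R^{m+n}.
\]

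For sufficiency I would write $A = vv^T$ (since $\text{rank}\,A = 1$), note that $A^2 = (v^T v) A = 0$ forces $v^T v = 0$ (and hence $\trace A = 0$ automatically), and observe that $MA = (Mv)v^T$ has rank at most one. Then $(MA)^2 = (v^T M v)\cdot MA = \trace(MA)\cdot MA$, so the required identity is immediate.

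For necessity I would parametrise $M = UU^T$ with $U^T U = I_m$, so the condition becomes the vanishing of the polynomial $e_2(U^T A U) := \tfrac12\bigl[(\trace(U^T A U))^2 - \trace((U^T A U)^2)\bigr]$ on the real Stiefel variety. Since everything is defined over $\R$, this polynomial identity persists on the complex Stiefel. The column-orthogonality $\sum_{t=1}^{m+n} x_{jt} x_{\alpha t} = \delta_{j\alpha}$ supplies an $m \leftrightarrow n$ duality $\hat\Phi_A \leftrightarrow \trace(A) - \hat\Phi_A$, letting me assume $m \le n$; together with the bound $\text{rank}\,A \le (m+n)/2$ forced by $A^2 = 0$, this yields $\text{rank}\,A \le n$, leaving enough room inside $\ker A$ to extend any bilinear-orthonormal pair $u_1, u_2 \in \C^{m+n}$ to an orthonormal $m$-frame with $u_3, \ldots, u_m \in \ker A$. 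For this $U$ the matrix $U^T A U$ is block-diagonal with only the top $2\times 2$ block nonzero, so $e_2(U^T A U) = \det[u_i^T A u_j]_{i,j \le 2}$. Vanishing for all such $u_1, u_2$ extends by polynomial continuation to
\[
(u^T A u)(v^T A v) = (u^T A v)^2 \qquad \text{for all } u, v \in \C^{m+n},
\]
a rigid rank-one characterisation of symmetric complex bilinear forms which forces $\text{rank}\,A \le 1$. The main obstacle is this last step: one must certify that $\ker A$ supplies a bilinear-orthonormal complement inside $\{u_1,u_2\}^\perp$, which is precisely where the $m \le n$ reduction and the bound $\text{rank}\,A \le (m+n)/2$ from $A^2 = 0$ become essential.
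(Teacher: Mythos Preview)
Your trace reformulation is correct and is genuinely different from the paper's route. The paper never introduces $M$ or $\trace(AM)$; it stays entirely in coordinates, expanding $2\hat\Phi_A^2+\hat\kappa(\hat\Phi_A,\hat\Phi_A)$ as
\[
\sum_{j,\alpha,k,\beta}\Omega_{jk}(\alpha,\beta)\,\hat\phi_{j\alpha}\hat\phi_{k\beta}
\;+\;2\sum_{j,\alpha}(A^2)_{j\alpha}\,\hat\phi_{j\alpha},
\]
where each $\Omega_{jk}(\alpha,\beta)$ is a sum of two $2\times2$ minors of $A$. Sufficiency is then read off (rank one kills every minor, $A^2=0$ kills the linear part), and necessity is delegated to a short matrix lemma showing that the vanishing of all $\Omega_{jk}(\alpha,\beta)$ forces $\text{\rm rank}\,A=1$. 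Your packaging $\hat\kappa(\hat\Phi_A,\hat\Phi_A)=-2\,\trace((MA)^2)+2\,\trace(A^2M)$ makes sufficiency a one-line computation and gives a clean target for necessity, at the price of the Stiefel/extension machinery.

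The gap is precisely where you flag it, and the bounds you invoke do not close it. You need $u_3,\dots,u_m\in\ker A\cap\{u_1,u_2\}^\perp$ with $u_i^Tu_j=\delta_{ij}$; the dimension count $\dim\ker A\ge m$ says nothing about non-degeneracy of the bilinear form on that intersection. Because $A$ is symmetric with $A^2=0$, one has $(\ker A)^\perp=\text{im}\,A\subseteq\ker A$, so the radical of the form on $\ker A$ is all of $\text{im}\,A$. In the extremal case $m=n$, $\text{\rm rank}\,A=m$ (for instance $A=\left(\begin{smallmatrix}I_m&iI_m\\ iI_m&-I_m\end{smallmatrix}\right)$, which is symmetric and squares to zero), the kernel equals the image and is \emph{totally isotropic}: every $w\in\ker A$ satisfies $w^Tw=0$, so for $m\ge3$ no bilinear-orthonormal vector exists in $\ker A$ at all and your extension is impossible for \emph{any} choice of $u_1,u_2$. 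Yet this $A$ genuinely fails the conformality identity (take $M=E_m$: then $\trace(MA)=m$ while $\trace((MA)^2)=m$), so it must be ruled out by some mechanism other than the frame-extension you describe. The paper avoids this difficulty by arguing with the coefficient array $\Omega_{jk}(\alpha,\beta)$ and the purely algebraic Lemma~\ref{lemma-matrices}, rather than by evaluating at special frames.
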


\begin{proof}
It is an immediate consequence of Lemma \ref{lemma-phi-phi-real} and the fact that $A$ is traceless that the tension field $\hat\tau$ satisfies $$\hat\tau(\hat\Phi_A)=-\,(m+n)\cdot\hat\Phi_A+m\cdot\trace A=-\,(m+n)\cdot\hat\Phi_A.$$
For the conformality operator $\hat\kappa$ we have 
\begin{eqnarray*}
& &2\cdot\hat\Phi_A^2+\hat\kappa(\hat\Phi_A,\hat\Phi_A)\\
&=&2\cdot\hat\Phi_A^2+\sum_{j,\alpha,k,\beta=1}^{m+n}
\hat\kappa(a_{j\alpha}\hat\phi_{j\alpha},a_{k\beta}\hat\phi_{k\beta})\\
&=&2\cdot\hat\Phi_A^2
+\sum_{j,\alpha,k,\beta=1}^{m+n}a_{j\alpha}a_{k\beta}\cdot 
\hat\kappa(\hat\phi_{j\alpha},\hat\phi_{k\beta})\\
&=&2\cdot\hat\Phi_A^2
-\sum_{j,\alpha,k,\beta=1}^{m+n}a_{j\alpha}a_{k\beta}\,\big \{\hat\phi_{j\beta}\cdot\hat\phi_{k\alpha}+\hat\phi_{jk}\cdot\hat\phi_{\beta\alpha}\big\}\\
& &\qquad 
+\,\frac 12\sum_{j,\alpha,k,\beta=1}^{m+n}a_{j\alpha}a_{k\beta}\,
\big\{\delta_{jk}\,\hat\phi_{\alpha\beta}+\delta_{\alpha\beta}\,\hat\phi_{jk}
+\delta_{j\beta}\,\hat\phi_{k\alpha}+\delta_{k\alpha}\,\hat\phi_{j\beta}\big\}\\
&=&2\cdot\hat\Phi_A^2-\sum_{j,\alpha,k\beta=1}^{m+n}
\big\{a_{j\beta}a_{k\alpha}+a_{jk}a_{\alpha\beta}\big\}\cdot\hat\phi_{j\alpha}\,\hat\phi_{k\beta}\\
& &
+\frac12\sum_{j,\alpha,k,\beta=1}^{m+n}a_{j\alpha}a_{k\beta}
\big(\delta_{ks}\,\hat\phi_{jr}+\delta_{kr}\,\hat\phi_{js}+\delta_{js}\,\hat\phi_{kr}+\delta_{jr}\,\hat\phi_{ks}\big)\\
&=&\sum_{j,\alpha,k,\beta=1}^{m+n}
\big\{2\,a_{j\alpha}a_{k\beta}-a_{j\beta}a_{k\alpha}-a_{jk}a_{\alpha\beta}\big\}\cdot\hat\phi_{j\alpha}\,\hat\phi_{k\beta}+2\sum_{j,\alpha,t=1}^{m+n}a_{jt}a_{\alpha t}\,\hat\phi_{j\alpha}\\
&=&\sum_{j,\alpha,k,\beta=1}^{m+n}
\Big\{\det
\begin{bmatrix}
a_{j\alpha} & a_{j\beta}\\
a_{k\alpha} & a_{k\beta}	
\end{bmatrix}
+
\det
\begin{bmatrix}
a_{j\alpha} & a_{jk}\\
a_{\beta\alpha} & a_{\beta k}	
\end{bmatrix}
\Big\}\cdot\hat\phi_{j\alpha}\,\hat\phi_{k\beta}\\
& &\qquad\qquad\qquad\qquad\qquad\qquad\qquad\qquad +2\sum_{j,\alpha=1}^{m+n}(a_j,a_\alpha)\cdot\hat\phi_{j\alpha}\\
&=&0,	
\end{eqnarray*}
since $A^2=0$ and $\text{rank}\, A=1$.
\end{proof}

\begin{proposition}\label{proposition-matrices}
	Let $A$ be a complex $(m+n)\times (m+n)$ matrix such that 
	$$\Omega_{jk}(\alpha,\beta)=\det
	\begin{bmatrix}
		a_{j\alpha} & a_{j\beta}\\
		a_{k\alpha} & a_{k\beta}	
	\end{bmatrix}
	+\det
	\begin{bmatrix}
		a_{j\alpha} & a_{jk}\\
		a_{\beta\alpha} & a_{\beta k}	
	\end{bmatrix}=0,$$
	for all $1\le j,k,\alpha,\beta\le m+n$.  Then $A^2=A\cdot\trace A$ and $\rank A=1$.
\end{proposition}

\begin{proof}
	The first statement is an immediate consequence of the relation 
	$$\sum_{\alpha=1}^{m+n}\Omega_{jk}(\alpha,\alpha)=(a_j,a_k)-a_{jk}\cdot\trace A.$$
	The second statement follows from $$\Omega_{jk}(\alpha,\beta)-\Omega_{jk}(\beta,\alpha)
	=3\,\big\{\,a_{j\alpha}a_{k\beta}-\,a_{j\beta}a_{k\alpha}\big\}=3\cdot \det
	\begin{bmatrix}
		a_{j\alpha} & a_{j\beta}\\
		a_{k\alpha} & a_{k\beta}	
	\end{bmatrix}.$$
\end{proof}

In Example \ref{example-matrix-families}, we now construct matrices satisfying the conditions in Theorem \ref{theorem-real} and hence manufacture a multi-dimensional family of eigenfunctions on the real Grassmannian $\SO{m+n}/\SO m\times\SO n$.

\begin{example}\label{example-matrix-families}
Let $p=(p_1,\dots,p_{m+n})\in\cn^{m+n}$ be a non-zero isotropic element i.e. $$p_1^2+p_2^2+\cdots + p_{m+n}^2=0.$$
Then the complex $(m+n)\times (m+n)$ matrix $A=p^t\cdot p$ with $a_{jk}=p_jp_k$ satisfies the conditions $A^2=0$, $\trace A=0$ and $\rank A=1$.
Furthermore the $\SO m\times\SO n$-invariant function $\hat\Phi_p:\SO{m+n}\to\cn$ with 
$$\hat\Phi_p(x)=\sum_{j,\alpha=1}^{m+n}p_{j}p_{k}\cdot(\sum_{t=1}^{m}x_{jt}\, x_{kt}),$$ 
induces an eigenfunction $\Phi_p:\SO{m+n}/\SO m\times\SO n\to\cn$ on the quotient space with  
$$\tau(\Phi_p)=-\,(m+n)\cdot\Phi_p\ \ \text{and}\ \ \kappa(\Phi_p,\Phi_p)=-\,2\cdot\Phi_p^2.$$
This provides a complex $(m+n-1)$-dimensional family of eigenfunctions on the real Grassmannian 
$\SO{m+n}/\SO m\times\SO n$.
\end{example}

Next we explain how our construction method is inspired by the classical theory of spherical harmonics on $S^n$, as the unit sphere in the Euclidean $\rn^{n+1}$.  For this see the excellent text \cite{Axl-Bou-Ram}.

\begin{remark}\label{remark-spherical-harmonics}
For $m=1$, we identify the first column of the generic matrix element 
$$\begin{bmatrix}
	x_{11} & \cdots &x_{1,n+1}\\
	\vdots & \ddots &\vdots\\
	x_{n+1,1} & \cdots &x_{n+1,n+1}
\end{bmatrix}$$
in $\SO{n+1}$ with $x=(x_1,x_2,\dots,x_{n+1})\in\rn^{n+1}$.  Then the linear space $\H^2_{n+1}$ of second order harmonic polynomials in the coordinates of $x\in\rn^{n+1}$ is generated by the elements
$$x_1^2-x_2^2,\, x_2^2-x_3^2,\,\dots ,\,x_n^2-x_{n+1}^2,\,x_1x_2,\,x_1x_3,\,\dots ,\, x_{n-1}x_{n+1},\, x_nx_{n+1},$$
forming a basis $\B$ for $\H^2_{n+1}$.  Their restrictions to the unit sphere $S^n$ are eigenfunctions of the Laplace-Beltrami operator, all of the same eigenvalue.  

By assuming, in Theorem \ref{theorem-real}, that the matrix $A$ is traceless we see that the $\SO n$-invariant function $\hat\Phi_A:\SO{n+1}\to\cn$, given by
$$\hat\Phi_A(x)=\sum_{j,\alpha=1}^{n+1}a_{j\alpha}\cdot\hat\phi_{j\alpha}(x),$$ 
is a linear combination of the basis elements in $\B$.  If $\text{\rm rank}\,A=1$ and $\trace\,A=0$, then these functions are eigen with respect to the conformality operator $\kappa$.
\end{remark}

\section{Eigenfunctions on the Real Flag Manifolds}
\label{section-real-flag}

The standard Riemannian metric on the special orthogonal group $\SO{n}$ induces a natural metric on the real homogeneous flag manifolds
$$\F(n_1,\dots,n_t)=\SO{n}/\SO {n_1}\times\SO{n_2}\times\cdots\times\SO{n_t},$$ 
where $n=n_1+n_2+\cdots +n_t$.  For this we have the Riemannian fibrations
$$\SO{n}\to\F(n_1,\dots,n_t)\to G_{n_k}(\rn^{n}).$$ 
Let us now write the generic element $x\in\SO n$ of the form
$$x=[\,x_1\,|\,x_2\,|\,\cdots\, |\,x_t\,],$$
where each $x_k$ is an $n\times n_k$ submatrix of $x$. Following Theorem \ref{theorem-real}, we can now, for each block,  construct a family $\hat\E_k$ of $\SO {n_k}$-invariant complex-valued eigenfunctions on the special orthogonal group $\SO n$ such that for all $\hat\phi,\hat\psi\in\hat\E_k$
$$\tau (\hat\phi)=\lambda_k\cdot\hat\phi\ \ \text{and}\ \ \kappa(\hat\phi,\hat\psi)=\mu_k\cdot\hat\phi\cdot\hat\psi,$$
with $\lambda_k=-n$ and $\mu_k=-2$.
We denote by $\hat\phi_k$ the generic element in $\hat\E_k$ and then, according to Theorem \ref{theorem-p-harmonic},  each function 
$$\hat\Phi_{p,k}(x)=c_{1,k}\cdot\hat\phi_k(x)^{1-\frac{\lambda_k}{\mu_k}}\log(\hat\phi_k(x))^{p-1} + c_{2,k}\cdot\log(\hat\phi_k(x))^{p-1}$$
is proper $p$-harmonic on an open and dense subset of $\SO n$.  The sum 
$$\hat\Phi_p=\sum_{k=1}^t\hat\Phi_{p,k}$$ constitutes a multi-dimensional family $\hat\F_p$ of $\SO{n_1}\times\cdots\times\SO{n_t}$-invariant proper $p$-harmonic functions on an open dense subset of $\SO n$.  Furthermore, each element $\hat\Phi_p\in\hat\F_p$ induces a proper $p$-harmonic function $\Phi^*_p$ defined on an open and dense subset of the real flag manifold 
$$\F(n_1,\dots ,n_t)=\SO n/\SO{n_1}\times\cdots\times\SO{n_t},$$ 
which does not descend onto any of the real Grassmannians if $t\ge 3$.

\renewcommand{\arraystretch}{2}
\begin{table}[h]
	\makebox[\textwidth][c]{
		\begin{tabular}{cccc}
			\midrule
			\midrule
$U/K$	& $\lambda$ & $\mu$ & Eigenfunctions \\
\midrule
\midrule
$\SO n$ & $-\,\frac{(n-1)}2$ & $-\,\frac 12$ & see \cite{Gud-Sak-1}\\
\midrule
$\SU n$ & $-\,\frac{n^2-1}n$ & $-\,\frac{n-1}n$ & see \cite{Gud-Sob-1} \\
\midrule
$\Sp n$ & $-\,\frac{2n+1}2$ & $-\,\frac 12$ & see \cite{Gud-Mon-Rat-1} \\
\midrule
$\SU n/\SO n$ & $-\,\frac{2(n^2+n-2)}{n}$& $-\,\frac{4(n-1)}{n}$ & see \cite{Gud-Sif-Sob-2} \\
\midrule
$\Sp n/\U n$ & $-\,2(n+1)$ & $-\,2$ & see \cite{Gud-Sif-Sob-2} \\
\midrule
$\SO{2n}/\U n$ & $-\,2(n-1)$ & $-1$ & see \cite{Gud-Sif-Sob-2} \\
\midrule
$\SU{2n}/\Sp n$ & $-\,\frac{2(2n^2-n-1)}{n}$ & $-\,\frac{2(n-1)}{n}$ & see \cite{Gud-Sif-Sob-2} \\
\midrule
$\SO{m+n}/\SO m\times\SO n$ & $-(m+n)$ & $-2$ & Theorem \ref{theorem-real} \\
\midrule
\midrule
\end{tabular}	
}
\bigskip
\caption{Eigenfunctions on classical irreducible  compact Riemannian symmetric spaces.}
\label{table-eigenfunctions-2}	
\end{table}
\renewcommand{\arraystretch}{1}

\section{Acknowledgements}

The authors would like to thank Fran Burstall and Adam Lindström for useful discussions on this work.  

The first author would like to thank the Department of Mathematics at Lund University for its great hospitality during her time there as a postdoc.

\end{document}